\newcommand{\Eq}[1]{(\ref{eq:#1})}
\newcommand{\Lem}[1]{Lem.~\ref{lem:#1}}
\newcommand{\Sec}[1]{\S \ref{sec:#1}}
\newcommand{\Fig}[1]{Fig.~\ref{fig:#1}}
\newcommand{\Tbl}[1]{Table~\ref{tbl:#1}}
\newcommand{\App}[1]{Appendix~\ref{app:#1}}
\newcommand{\InsertFig}[4]
{\begin{figure}[h!t]
       \centerline{
         \includegraphics[width=#4\columnwidth]{./figures/#1}
       }
       \caption{{\footnotesize  #2}
       \label{fig:#3}}
\end{figure}}
\newcommand{\InsertFigTwo}[5] {
\begin{figure}[h!t]
       \centerline{
         \includegraphics[width=#5\columnwidth]{./figures/#1}
         \hskip 0.5in
         \includegraphics[width=#5\columnwidth]{./figures/#2}
       }
       \caption{{\footnotesize  #3}
       \label{fig:#4}}
\end{figure}}
\newcommand{\InsertFigThree}[6] {
\begin{figure}[ht]
       \centerline{
         \includegraphics[width=#6\columnwidth]{./figures/#1}
         \hskip 0.5in
         \includegraphics[width=#6\columnwidth]{./figures/#2}
         \hskip 0.5in
         \includegraphics[width=#6\columnwidth]{./figures/#3}
       }
       \caption{{\footnotesize  #4}
       \label{fig:#5}}
\end{figure}}
\newcommand{\InsertFigFour}[7] {
\begin{figure}[ht]
       \centerline{
\renewcommand{\arraystretch}{0.01}
         \begin{tabular}{cc}
         \includegraphics[width=#7\columnwidth]{./figures/#1}&  \includegraphics[width=#7\columnwidth]{./figures/#2} \\
        \includegraphics[width=#7\columnwidth]{./figures/#3}  &  \includegraphics[width=#7\columnwidth]{./figures/#4}
        \end{tabular}
       }
       \caption{{\footnotesize  #5}
       \label{fig:#6}}
\end{figure}}
\newcommand{\bN}{{\mathbb{ N}}}
\newcommand{\bR}{{\mathbb{ R}}}
\newcommand{\bZ}{{\mathbb{ Z}}}
\newcommand{\cB}{{\cal B}}
\newcommand{\cE}{{\cal E}}
\newcommand{\cF}{{\cal F}}
\newcommand{\cG}{{\cal G}}
\newcommand{\cL}{{\cal L}}
\newcommand{\cO}{{\cal O}}
\newcommand{\cR}{{\cal R}}
\newcommand{\eps}{\varepsilon}
\newcommand{\hen} {H\'enon }
\DeclareMathOperator{\sign}{sign}
\newcommand{\Tvec}[2]{\begin{array}{c} {#1}\\{#2}\end{array}}
\newcommand{\Thvec}[3]{\begin{array}{c} {#1}\\{#2}\\{#3}\end{array}}
\newtheorem{thm}{Theorem}
\newtheorem{lem}[thm]{Lemma}
\newtheorem{con}[thm]{Conjecture}
\newcommand{\beq}[1]{\begin{equation}\label{eq:#1}}
\newcommand{\eeq}{\end{equation}}
\newenvironment{se}[1]{\equation\label{eq:#1}\aligned}{\endaligned\endequation}
\newcommand{\bsplit}[1]{\begin{se}{#1}}
\newcommand{\esplit}{\end{se}}
\newenvironment{example}[1][]
  {
	\setlength \leftmargini {1.0em}		
	\setlength \topsep {0.5em}			
	\begin{quote}
	{\it Example#1} }
	{\end{quote}
  }
\newcommand{\bexam}[1][:]{\begin{example}[#1]}
\newcommand{\eexam}{\end{example}}
\title{Anti-Integrability for Three-Dimensional Quadratic Maps}
\author{Amanda E. Hampton and James D.~Meiss\thanks
      {
        The authors were supported in part by NSF grant DMS-181248.
        Useful conversations with Holger Dullin are gratefully acknowledged. 
      }
    \\
 \begin{tabular}{c}
	Department of Applied Mathematics\\
    University of Colorado \\
	Boulder, CO 80309-0526 \\
	Amanda.Hampton@colorado.edu\quad
	James.Meiss@colorado.edu\\ 
\end{tabular}
}
\date{\today}
\begin{document}
\maketitle

\begin{abstract}

We study the dynamics of the three-dimensional quadratic diffeomorphism using a concept first introduced thirty years ago for the Frenkel-Kontorova model of condensed matter physics: the anti-integrable (AI) limit. At the traditional AI limit, orbits of a map degenerate to sequences of symbols and the dynamics is reduced to the shift operator, a pure form of chaos. Under nondegeneracy conditions, a contraction mapping argument can show that infinitely many AI states continue to orbits of the deterministic map. For the 3D quadratic map, the AI limit that we study is a \textit{quadratic correspondence} whose branches, a pair of one-dimensional maps, introduce symbolic dynamics on two symbols. The AI states, however, are nontrivial orbits of this correspondence. The character of these orbits depends on whether the quadratic takes the form of an ellipse, a hyperbola, or a pair of lines. Using contraction arguments, we find parameter domains for each case such that each symbol sequence corresponds to a unique AI state. In some parameter domains, sufficient conditions are then found for each such AI state to continue away from the limit to become an orbit of the original 3D map. Numerical continuation methods extend these results, allowing computation of bifurcations and obtaining orbits with horseshoe-like structures and intriguing self-similarity.  We conjecture that pairs of periodic orbits in saddle-node or period doubling bifurcations have symbol sequences that differ in exactly one position.
\end{abstract}

\section{Introduction}\label{sec:Intro}

In this paper we  explore the chaotic behavior of three-dimensional quadratic diffeomorphisms by constructing a limit in which the dynamics is simple, though essentially fully chaotic: the anti-integrable (AI) limit. As shown in \cite{Lomeli98, Lenz99}, the general quadratic diffeomorphism $L:\bR^3 \to \bR^3$ with quadratic inverse can be written in the form 
\bsplit{QVPMap}
	L(x,y,z) &= (\delta z+ G(x,y), x,y), \\
	G(x,y)   &= \alpha + \tau x - \sigma y + ax^2 + bxy + cy^2 .
\esplit
Here $G$ is an arbitrary quadratic polynomial with the six parameters $(\alpha,\tau,\sigma,a,b,c)$ and $\delta = \det{DL}$, the Jacobian determinant of the map.
This map is also the quadratic normal form near a saddle-center bifurcation with a triple-one multiplier \cite{Dullin08a}. The orbits of \Eq{QVPMap} are sequences $\{(x_t,y_t,z_t): t \in \bZ\}$ that satisfy
\[
   (x_{t+1},y_{t+1},z_{t+1}) = L(x_{t},y_{t},z_{t}).
\]

The volume-preserving case, $\delta = 1$, is of interest as an idealized model for the motion of particles in an incompressible fluid. Studying the dynamics of such maps is important in understanding Lagrangian mixing problems, with many applications \cite{Haller13}.
The volume-contracting case, $|\delta| < 1$, arises as a normal form near homoclinic bifurcations of 3D maps \cite{Gonchenko06} and can give rise to discrete Lorenz-like attractors \cite{Gonchenko21}.

\textit{Anti-integrability} was first introduced for discrete-time Hamiltonian systems by Aubry and Abramovici \cite{Aubry90} in contradistinction to \textit{integrability}, where the system has a full set of invariants and the phase space is foliated by invariant tori. A perturbation of an integrable system gives dynamics that can sometimes be described by KAM theory: many tori are preserved under smoothness and nondegeneracy conditions. By contrast, anti-integrability corresponds to non-deterministic dynamics: in the simplest case AI dynamics can be described by a full-shift on a set of symbols. 

The anti-integrable limit was originally introduced for area-preserving maps through
the action, which is a sum of discrete Lagrangians or generating functions,
\[
    S_\eps(x_t,x_{t+1}) = \eps K(x_t-x_{t+1}) - V(x_t),
\]
along a sequence $\{x_t: t \in \bZ\}$ of the configuration spatial variables \cite{Aubry95,Bolotin15}. Each critical point of the action corresponds to an orbit. Physically, $K$ represents the kinetic energy (in discrete time) and $V$ the potential energy. The resulting dynamics is integrable when the potential energy vanishes. In contrast, the AI limit can be interpreted as the kinetic energy, or equivalently $\eps$, going to zero. In this case critical points of the action become arbitrary sequences of critical points of the potential energy, and therefore the ``dynamics'' is non-deterministic. When the AI limit is ``nondegenerate'' then each AI state continues to nonzero $\eps$, becoming a true orbit of the original system. This theory and methodology has proved useful for analyzing two-dimensional systems \cite{Aubry90, Aubry95, Bolotin15} as well as higher-dimensional symplectic maps \cite{MacKay92b}.

One can generalize this idea to more general maps by converting them to implicit equations on a sequence $\{x_t\}$. For example, it has been applied to the 1D logistic map \cite{Chen06} and---as we will recall below---to H\'enon's 2D map \cite{Aubry95, Sterling98, Sterling99, Dullin05, Chen06}. Results of this analysis give an alternative argument to that of Devaney and Nitecki \cite{Devaney79} for the existence of a Smale horseshoe in a domain of parameter space, which implies the existence of a set of hyperbolic orbits conjugate to a Bernoulli shift on two symbols. In addition one can find parameter sets for which there are specific sub-shifts, study the first bifurcations away from $\eps = 0$, and---for the area-preserving case---find the correspondence between symbol sequences and orbits of a given rotation number \cite{Dullin05}.

The theory has been generalized to higher-order scalar difference equations \cite{Du06,Li06,Juang08,Li10} and has been used to extend \cite{MacKay92b} to more general multi-dimensional cases \cite{Chen15}. As we will review in \Sec{3DAI}, some of these results give AI limits for special cases of three-dimensional diffeomorphisms like \Eq{QVPMap}. 

To be more specific, consider a family of maps $f: M \to M$ on a phase space $M$ so that the corresponding orbits are sequences $Z = \{z_t: t \in \bZ\}$ determined by $z_{t+1} = f(z_{t})$. The general idea of an AI limit is to rewrite the dynamics as an implicit relation $\cL_\eps(Z) = 0$ with a parameter $\eps$.
An AI limit corresponds to a singular limit of this implicit relation. For example, suppose that for $\eps = 0$, $\cL_0(Z) = 0$ degenerates to a set of equations that depends only upon individual points along the orbit, say $\cL_0(z_t) = 0$. 
If this equation has multiple solutions then the dynamics degenerates to symbolic dynamics: an element $z_t$ of the sequence can be chosen to be \textit{any} of the solutions of $\cL_0(z_t) = 0$ for each $t$. Under an implicit function style result, it can be possible to continue these symbolic orbits so that they become true orbits of the original map when $\eps \neq 0$.

As an example, the \hen map \cite{Henon76} $H: \bR^2 \to \bR^2$ can be written
\beq{HenonMap}
    (x',y')=H(x,y)=(y-k+x^2,-\delta_H x),
\eeq
with Jacobian $|DH| = \delta_H$ and parameter $k$. Instead of considering the implicit form of the map on $\bR^2$, it is more convenient to rewrite \Eq{HenonMap} as a second difference equation: let $(x_{t+1},y_{t+1})=H(x_t,y_t)$ and note that $y_{t} = -\delta_H x_{t-1}$, so that \Eq{HenonMap} is equivalent to
\beq{HenonDifference}
     x_{t+1} + \delta_H x_{t-1} = x_t^2 -k.
\eeq
The AI limit corresponds to $k\to\infty$. Defining $\eps=\frac{1}{\sqrt{k}}$ and rescaling $\xi=\eps x$, \Eq{HenonDifference} can be written as the implicit relation
\[
   \cL_\eps (\xi_{t+1},\xi_t,\xi_{t-1}) = \xi_t^2 - 1 -\eps(\xi_{t+1}+\delta_H \xi_{t-1}) = 0
\]
At the AI limit, $\eps \to 0$, this second difference equation degenerates to the relation
\[
    \xi_t^2=1.
\]
This limit is singular in the sense that the implied dynamics are no longer deterministic; indeed, orbits now correspond to any bi-infinite sequence with $\xi_t = \pm 1$ for all $t \in \bN$. Given such a sequence, the dynamics can be represented by the shift on two symbols, $\{-,+\}$.
We think of these as ``AI states'' as they are no longer ``orbits'' of a deterministic map. These AI states continue to orbits of \Eq{HenonDifference} for small enough $\eps$, as was proven using a contraction mapping argument \cite{Sterling98}. 

Our goal here is to use a similar approach for a large subset of the full seven-dimensional parameter space of the family of maps \Eq{QVPMap}. We obtain a number of possible AI limits in \Sec{3DAI} and discuss previous research that has been applied to special cases of \Eq{QVPMap}. In \Sec{Formulation} we specify the particular AI limit on which this paper focuses. A key feature of this case is that the AI limit corresponds to iteration of a quadratic correspondence, thus introducing symbolic dynamics. In \Sec{Existence}, we find parameters such that orbits exist for every symbol sequence at the AI limit. In \Sec{PersistOrbit}, we find parameters so that orbits at the AI limit can be continued away from the limit for sufficiently small $\eps>0$. Numerical methods are used in \Sec{Continuation} to continue orbits beyond the analytical bounds for several examples.

\section{Anti-Integrable Limits of 3D Quadratic Maps}\label{sec:3DAI}
Here we formulate possible AI limits of the 3D quadratic map \Eq{QVPMap}. This map can be treated as a five parameter family. Indeed, it was shown in \cite{Lomeli98} that whenever $a+b+c \neq 0$ and $2a+b \neq 0$\footnote
{If one of these is violated, other scaling transformations can be found to eliminate two of the parameters.}
an affine coordinate transformation allows one to to set
\beq{ParamSpace}
    a+b+c = 1, \mbox{ and} \quad \tau = 0 .
\eeq
We adopt this simplification so the map only depends on say $(\alpha, \sigma, a, c)$, in addition to the Jacobian $\delta$. 
To mimic the analysis for the \hen map, first rewrite \Eq{QVPMap} as a third-order difference equation for $\{x_t: t \in \bZ\}$ upon noting that $y_{t+1} = x_t$ and $z_{t+1} = x_{t-1}$,
\beq{VPDiff}
	x_{t+1}-\delta x_{t-2} = G(x_t,x_{t-1}).
\eeq
There are a number of ways to scale parameters to get an anti-integrable limit. We will assume that $a,b,c$ are ``structural'' parameters that remain finite and that $|\delta| \le 1$ so that the map is not volume expanding.
Following the \hen example in \Sec{Intro}, we introduce $\eps$ by scaling the phase space variables, defining $\xi_t = \eps x_t$. Then using \Eq{ParamSpace}, \Eq{VPDiff} becomes
\bsplit{Rescaled}
    0 & = \cL_\eps(\xi_{t+1},\xi_t,\xi_{t-1},\xi_{t-2}) \\
      &= Q( \xi_t, \xi_{t-1}) + \eps^2 \alpha - 
          \eps(\xi_{t+1} + \sigma \xi_{t-1}- \delta \xi_{t-2}),
\esplit
where we define the quadratic form
\beq{Qdefine}
	Q(x,y) \equiv ax^2 + b xy + cy^2.
\eeq
The discriminant of $Q$,
\beq{DeltaDefine}
    \Delta \equiv b^2-4ac, 
\eeq
will play a crucial role in the analysis below.

We can categorize different AI limits by scaling the remaining parameters $\alpha$ and $\sigma$ with $\eps$ so that the third-order difference equation \Eq{Rescaled} degenerates 
to a lower-order system at $\eps = 0$. There are four potential cases:
\begin{enumerate}[label={(\arabic*)}]
\item[(0)]$\alpha, \sigma$ finite: In this case the AI limit corresponds simply to setting $\eps = 0$ in \Eq{Rescaled}
to obtain $Q(\xi_t, \xi_{t-1}) = 0$. The only nontrivial case is then a pair of lines through the origin when $\Delta >0$.
\item[(1$\pm$)] $\alpha \to \pm \infty$: Define $\eps$ by $\alpha=\pm \eps^{-2}$, then \Eq{Rescaled} becomes
\beq{AIDiff}
	\eps(\xi_{t+1} + \sigma \xi_{t-1}  - \delta \xi_{t-2})  = Q( \xi_t, \xi_{t-1}) \pm 1 .
\eeq
As $\eps \to 0$, the map degenerates to $Q(\xi_t, \xi_{t-1}) = \mp 1$,
which defines a quadratic curve. The character of the possible AI states depends on the value of the discriminant \Eq{DeltaDefine}.
\item[($2\pm$)] $\sigma \to \pm \infty$: Set $\sigma=\pm \eps^{-1}$ so that \Eq{Rescaled} becomes
\beq{SigmaAILimit}
    \eps(\xi_{t+1}-\delta \xi_{t-2}-\eps\alpha)=Q(\xi_t,\xi_{t-1}) \mp \xi_{t-1},
\eeq
which yields the AI limit $Q(\xi_t,\xi_{t-1})= \pm\xi_{t-1}$, a shifted quadratic curve.
\item[($3\pm$)]$\alpha, \sigma \to \pm \infty$: Let $\alpha=\pm\eps^{-2}$, and assume that $\sigma= r \eps^{-1}$, so that $\sigma^2/\alpha = \pm r^2$ is finite, to obtain
\[
    \eps(\xi_{t+1}-\delta \xi_{t-2}) = Q(\xi_t,\xi_{t-1})  - r\xi_{t-1} \pm 1,
\]
leading to the AI limit $Q(\xi_t,\xi_{t-1})= - r\xi_{t-1}\mp1 $, again a shifted quadratic curve.
\end{enumerate}

Some of these AI limits have been studied elsewhere.
Juang et al. \cite{Juang08} consider multidimensional difference equations, $\cL_\eps(x_t,\ldots, x_{t+n}) = 0$ that reduce to $\cL_0(x_t,x_{t+k}) = 0$ when $\eps = 0$. They assume this implicit equation has a branch $x_{t+k} = f(x_t)$ so that the one-dimensional map $f$ is piecewise analytic and has an invariant set with positive topological entropy. In this case, there exists an $\eps_0 > 0$ such that these AI states become orbits of the difference equation $\cL_\eps = 0$, and depend continuously on $\eps \in [0,\eps_0)$ in the uniform topology. This gives a set of orbits of positive topological entropy for the original difference equation.

These ideas generalize those in \cite{Du06} which had applications to Arneodo-Coullet-Tresser maps and to the map \Eq{QVPMap}. For the quadratic map, this analysis corresponds to case ($2+$) above,  $\sigma \to \infty$. They suppose that $b=0$ (or more generally that $b = \cO(\eps)$)
to obtain
the AI limit associated with \Eq{SigmaAILimit}. Solving this equation for $\xi_t$ and choosing a branch gives the map
\[
	\xi_{t} =f_+(\xi_{t-1}) =  \sqrt{\frac{1}{a}(\xi_{t-1} - c\xi_{t-1}^2)} ,
\]
when $1 > a = 1-c>0$, on the domain $[0,1/c]$. This has maximum $f_+(\tfrac{1}{2c}) = \tfrac{1}{2\sqrt{ac}}$, so the map has an escaping interval if $c/a > 4$, or $c \in (\tfrac45,1)$. They note that this map has a compact invariant set with positive topological entropy when  $c > 0.791$, and consequently obtain a persistence result for the range $c \in (0.791,1)$.

A related result was obtained in \cite{Li06} where it was assumed that $\cL_0(x_t)$ depends only on one variable and has at least two simple roots. Under this assumption, the AI limit is similar to that of the \hen map discussed in \Sec{Intro}. The limit studied in \cite{Li06} corresponds to case ($1-$), $\alpha \to -\infty$, assuming that $b=c=0$ (or more generally  $b,c = \cO(\eps)$). Under these assumptions the AI limit associated with \Eq{AIDiff} reduces to
\[
    \xi_t^2 = 1 ,
\]
so that the AI states are $\{\xi_t: t \in \bZ\} \subset \{-1, 1\}^\bZ$, and the ``dynamics'' at the AI limit becomes a full shift on two symbols. This is a special case of the analysis presented in \Sec{Formulation} below.

Another case in which $\cL_0$ depends only on a single variable was studied as an application of the results in \cite{Chen16}. Their limit corresponds to case ($2-$) assuming $a=b=0$ (or more generally $a,b = \cO(\eps)$), where AI limit associated with \Eq{SigmaAILimit} simplifies to
\[
    \xi_{t-1}(\xi_{t-1} + 1) = 0 ,
\]
which again has two possible solutions and the AI states are  $\{0,-1\}^\bZ$.

\section{Quadratic Correspondences at the Anti-integrable Limit}\label{sec:Formulation}
In this paper we will consider the AI limit corresponding to case (1) of \Sec{3DAI}
where $\alpha \to \pm \infty$. 
Note that when $\alpha \to \infty$, the only nontrivial case occurs when $\Delta > 0$, since
then the set $Q = -1$ has nonzero solutions. In the alternative case, however, i.e., when $\alpha \to -\infty$, the set $Q=1$ can have nontrivial solutions for any value of $\Delta$. Thus
we will exclusively consider this latter case.

 In this case, $\alpha \to -\infty$,  we define the difference equation
\beq{OurAIDiff}
    \cL_\eps(\xi_{t+1},\xi_t,\xi_{t-1},\xi_{t-2}) = 
      Q( \xi_t, \xi_{t-1}) - 1 -\eps(\xi_{t+1} + \sigma \xi_{t-1}  - \delta \xi_{t-2}) ,
\eeq
so that $\cL_0 = 0$ corresponds to
\beq{AIRelation}
    Q(\xi_t,\xi_{t-1}) = 1.
\eeq
We generalize the study of \cite{Li06} to allow for arbitrary values of the discriminant \Eq{DeltaDefine}. In this case the ``dynamics'' at the AI limit is not deterministic, but
rather a \textit{quadratic correspondence} \cite{Bullett88,McGehee91}.

The relation \Eq{AIRelation} defines the quadratic curve 
\beq{EDelta}
    \cE = \left\{(u,v): av^2 + buv + cu^2 = 1  \right\} \subset \bR^2 .
\eeq
Sequential points on a valid AI trajectory must lie on the curve 
$(\xi_{t-1},\xi_t) \in \cE$. As sketched in \Fig{Cobwebs}, $\cE$ is either a pair of parallel lines when $\Delta=0$, an ellipse when $\Delta <0$, or a hyperbola when $\Delta > 0$. Note that $\cE$ is reflection symmetric through the origin and, since $a+b+c=1$, 
always intersects the two points $( 1, 1)$ and $(-1,-1)$.

The quadratic curve \Eq{EDelta} can be thought of dynamically as a relation or correspondence. Thus, assuming $a \neq 0$, an AI \textit{state} is defined by a sequence $\{\xi_t: t\in \bZ\}$
that satisfies
\beq{MapAsFxn}
    \xi_t = f_{s_{t}}(\xi_{t-1}) = \frac{1}{2a}\left(-b\xi_{t-1}+ 
         s_t\sqrt{\Delta \xi_{t-1}^2+4a}\right), \quad s_t \in \{-,+\} .
\eeq
Provided that $\Delta \xi_{t-1}^2 + 4a > 0$, either choice $s_t = \pm$ is valid, and the 
quadratic curve is represented by the two branches $f_\pm$ of \Eq{EDelta}.
Therefore, each valid AI state is associated with a sequence of symbols, 
\beq{SymbolSpace}
    s = \{\ldots s_0,s_1,s_2\ldots \}  \in \Sigma = \{- ,+\}^\infty ,
\eeq
so that $s_t$ represents the branch of the function implicitly defined by \Eq{AIRelation} that is chosen at time $t$.

Examples of the iteration of \Eq{MapAsFxn} for three cases are shown in \Fig{Cobwebs}.
When $s_t=-$, the point $(\xi_{t-1},\xi_t)$ lies on the lower, negative branch of the quadratic curve, and when $s_t=+$, it lies on the upper, positive branch of the curve. The red loop in the figure is the iteration corresponding to the period-four orbit, $\{-,-,+,+\}$.

\InsertFigThree{ParallelLinesCobweb}{EllipseCobweb}{HyperbolaCobweb}{
Three examples of the curve \Eq{EDelta} and iteration of the relation \Eq{MapAsFxn}:  
(a) parallel lines, $\Delta= 0$, with $(a,b,c) = \tfrac19(25,-20,4)$
(b) ellipse, $\Delta < 0$, with $(a,b,c)=(3,-2.75,0.75)$, and 
(c) hyperbola, $\Delta > 0$, with $(a,b,c)=(2,-1,0)$. 
The curve $\cE$ is black and the diagonals $\xi_t=\pm\xi_{t-1}$ are dashed blue. 
The red loop shows the orbit $\{-,-,+,+\}$: vertical segments from $(\xi_{t-1},\xi_{t-1})$ 
to  a branch of the curve $Q=1$ at $(\xi_{t-1},\xi_t)$ followed by a
horizontal segments to $(\xi_t,\xi_t)$.
The vertical direction depends on the symbol: the orbit moves up if $s_t = +$ and down if $s_t = -$. The interior of the green box is the set $B^2$, \Eq{BDefine}, a forward invariant set on which \Eq{MapAsFxn} is a contraction.}
{Cobwebs}{0.3}

Of course, the map represented by the third-order difference equation $\cL_\eps = 0$
is three-dimensional. Indeed \Eq{OurAIDiff} can be thought of as determining
$\xi_{t+1}$ as a function of three preceding points.
Thus the curve \Eq{EDelta} becomes a surface in $\bR^3$.
Since this relation must hold for all $t$, AI states must lie on the intersection of the
two surfaces $Q(\xi_t,\xi_{t-1})=1$ and $Q(\xi_{t+1},\xi_t)=1$,
labeling the axes by $(\xi_{t-1},\xi_{t},\xi_{t+1}) \in \bR^3$. An example is shown in \Fig{3DAISurface} for the elliptic case, when $\Delta < 0$.

\InsertFig{Ellipse3DAISurface}{AI states of the 3D map must lie on the intersection of 
two quadratic surfaces $Q(\xi_t,\xi_{t-1}) = 1$ (orange) and $Q(\xi_{t+1},\xi_t) = 1$ (blue). Pictured is the elliptic example of \Fig{Cobwebs}(b).
Valid AI states must lie on the intersection of these two surfaces, i.e., the two curves that are highlighted in red.}
{3DAISurface}{0.5}

\section{Existence of Orbits at the AI Limit}\label{sec:Existence}

In this section, we will use \Eq{AIRelation}, which is equivalent to the pair of maps \Eq{MapAsFxn} 
when $a \neq 0$, to prove the existence of AI states in subsets of the $(a,c)$ parameter space. 
The analysis will depend upon the sign of the discriminant \Eq{DeltaDefine} of the quadratic curve \Eq{EDelta}. Using  the normalization \Eq{ParamSpace}, the set $\Delta = 0$, where $\cE$ is a pair of parallel lines, corresponds to a parabola in the first quadrant of the $(a,c)$ plane that divides the plane into regions with $\Delta \gtrless 0$, as shown in \Fig{acPlane}(a). As we will see in \Sec{AIParallel} below, in the $\Delta=0$ case
it is convenient to parameterize $(a,b,c)$ with respect to the slope $m$ of the lines:
\beq{DeltaZero}
    (a,b,c) = \frac{1}{(1-m)^2} (1, -2m, m^2),  \quad (\Delta = 0),
\eeq
for $m\neq1$.
\InsertFigTwo{acPlane}{acPlaneRegions}{(a) Classification of the curve \Eq{EDelta} depending upon $a$ and $c$, for $b = 1-a-c$: the blue region corresponds to ellipses, $\Delta < 0$,
the tan region to hyperbolae, $\Delta > 0$ and the red curve to parallel lines, $\Delta = 0$. 
On the blue dashed line $b=0$; $b$ is positive below the line and negative above. 
(b) Regions in the $(a,c)$-plane that result in a contraction at the AI limit in either 
forward ($\cR^+$) or backward ($\cR^-$) time. These sets are constructed in \Sec{AIParallel}-\Sec{AIHyperbola}.}
{acPlane}{.5}

The correspondence $Q(\xi_t,\xi_{t-1}) = 1$ defines an AI state $\{\xi_t: t \in \bZ\}$ provided that all pairs $(\xi_{t-1},\xi_t) \in \cE$.  First note that fixed points always exist: indeed, since $a+b+c = 1$, then 
$Q(\xi,\xi) = 1$ when $\xi = \pm 1$. Such period-one sequences thus correspond to $s = (+)^\infty$ 
and $(-)^\infty$, with $\xi_t = s_t$.
Similarly, period-two orbits correspond to points $Q(\xi_1,\xi_0) = Q(\xi_0,\xi_1) = 1$, which if $a \neq c$ implies that
\beq{PeriodTwo}
    \xi_0 = -\xi_1 = \frac{1}{\sqrt{1-2b}}
\eeq
providing $b < \tfrac12 < a+c$.
Thus in this domain, there is a unique period-two state with symbol sequence $s = (+,-)^\infty$, so that $\sign(\xi_t) = s_t$. For the special case $a = c$, every point on $\cE \setminus \{(x,x)\}$ has a corresponding period-two point.
Note however, unlike the \hen case of \Sec{Intro}, the period-two states are not simply given by their symbol sequences.

More generally, we will use contraction arguments to find a set of parameters for which there is a unique orbit for each symbol sequence. 
In the following subsections, we will separately analyze the three $\Delta$ cases.
The first step is to find parameter domains so that $f_\pm: B \to B$ for some closed interval,
\beq{BDefine}
    B = [-x^*,x^*] \subset \bR, 
\eeq
where $x^*$ depends upon $(a,c)$. In \Fig{Cobwebs}, this interval is shown as the green box, $B^2$, in $(\xi_{t-1},\xi_t)$-space. The second is to require that both maps are contractions on $B$ (i.e., have slopes with magnitude less than one). Suppose that
\beq{RCPlus}
    \cR^+ = \{(a,c): \exists\, x^*(a,c) > 0, \, f_\pm(B) \subset B, \, 
       |f'_\pm(x)|<1, \, \forall\, x \in B \}
\eeq
is a subset  on which both of these conditions are true (we will construct this region in the subsections below). Letting $\cB = B^\infty$, a simple application of the contraction mapping theorem gives:
\begin{lem}\label{lem:AIContraction} Given $(a,c) \in \cR^+$ and $\xi\in\cB$, there is a one-to-one correspondence between each sequence $s \in \Sigma$ 
and state $\{\xi\} \subset \cB$ satisfying \Eq{MapAsFxn}.
\end{lem}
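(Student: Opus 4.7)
The plan is to set up a contraction on the sequence space $\cB = B^\infty$ indexed by a fixed symbol sequence $s \in \Sigma$, and apply the Banach fixed-point theorem. Endow $\cB$ with the supremum metric $d(\xi,\eta) = \sup_t |\xi_t - \eta_t|$; since $B$ is closed and bounded, $\cB$ is a complete metric space. For each $s \in \Sigma$, define an operator $T_s : \cB \to \cB$ by
\[
    (T_s \xi)_t = f_{s_t}(\xi_{t-1}),
\]
where $f_\pm$ are the two branches in \Eq{MapAsFxn}. A sequence $\xi \in \cB$ is a fixed point of $T_s$ if and only if it is an AI state realising the symbol sequence $s$.

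First I would verify that $T_s$ actually sends $\cB$ into itself: this is exactly the invariance condition $f_\pm(B) \subset B$ built into the definition \Eq{RCPlus} of $\cR^+$. Next, since $(a,c) \in \cR^+$ gives $|f'_\pm(x)| < 1$ pointwise on the compact interval $B$, continuity of $f'_\pm$ yields a uniform Lipschitz constant $k = \max_{\pm} \sup_{x \in B} |f'_\pm(x)| < 1$. Then for any $\xi, \eta \in \cB$,
\[
    |(T_s\xi)_t - (T_s\eta)_t| = |f_{s_t}(\xi_{t-1}) - f_{s_t}(\eta_{t-1})| \le k\, |\xi_{t-1} - \eta_{t-1}| \le k\, d(\xi,\eta),
\]
and taking the supremum over $t$ gives $d(T_s\xi, T_s\eta) \le k\, d(\xi,\eta)$, so $T_s$ is a contraction on the complete space $\cB$. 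The Banach fixed-point theorem then produces, for each $s \in \Sigma$, a unique $\xi(s) \in \cB$ with $\xi_t(s) = f_{s_t}(\xi_{t-1}(s))$ for all $t \in \bZ$.

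To obtain the one-to-one correspondence in the reverse direction, I would check that distinct symbol sequences cannot produce the same AI state. If $\xi \in \cB$ satisfies \Eq{MapAsFxn} for two different symbol sequences $s$ and $s'$, then at some index $t$ we must have $f_+(\xi_{t-1}) = f_-(\xi_{t-1})$, which forces $\Delta \xi_{t-1}^2 + 4a = 0$. Under the construction of $\cR^+$ in the following subsections, one has $\Delta x^2 + 4a > 0$ strictly on $B$, so the two branches are separated and the symbol $s_t$ is determined by $\xi_{t-1}$ and $\xi_t$; hence the map $s \mapsto \xi(s)$ is injective, completing the bijection $\Sigma \leftrightarrow \{\text{AI states in } \cB\}$.

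The content of the lemma is essentially bookkeeping once the region $\cR^+$ is defined correctly, so the real difficulty is not in the fixed-point argument itself but in the preceding analysis that constructs $\cR^+$ and shows $B$ is invariant with uniform contraction rate for both branches simultaneously; that work is deferred to the three case-by-case subsections. The only subtlety within the proof of the lemma is ensuring that the Lipschitz constant $k < 1$ is uniform in $t$, which follows from the symbol-independence of the bound $\sup_{x \in B} |f'_\pm(x)|$.
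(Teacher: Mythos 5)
Your proposal is correct and follows essentially the same route as the paper: a Banach fixed-point argument for the operator $\cF_t(\xi) = f_{s_t}(\xi_{t-1})$ on the complete space $\cB$ in the sup metric, with invariance and the contraction bound supplied by the defining conditions of $\cR^+$, and injectivity of $s \mapsto \xi(s)$ from strict positivity of the radicand on $B$. Your explicit remark that the uniform Lipschitz constant $k<1$ follows from continuity of $f'_\pm$ on the compact interval $B$ is a small point the paper leaves implicit, but it is not a different argument.
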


\begin{proof}
Consider any sequence $\xi = \{\xi_t : t \in \bZ\} \in \cB$ and $\eta = \{\eta_t : t \in \bZ\} \in \cB$ where $\cB \subset \bR^\infty$ is the countable product of the closed interval $B$. Given some $s\in\Sigma$, define $\cF:\cB \to \cB$ by
\beq{AIMapF}
    \cF_t(\xi) = f_{s_t}(\xi_{t-1}) ,
\eeq
such that fixed points of $\cF$ are orbits of the map \Eq{MapAsFxn}. The closed interval $B$ is a compact subset of $\bR$, and thus $\cB$ is complete in the $\ell^\infty$ norm  by Tychonoff's theorem. Then
\begin{align*}
    \| \cF(\xi) -\cF(\eta)\|_\infty \le \sup_{x\in B} |f_\pm'(x)|\, \|\xi - \eta\|_\infty .
\end{align*}
When $(a,c) \in \cR^+$ and $x \in B$, then $|f'_\pm (x)| < 1$, and $\cF$ is a contraction in the $\ell^\infty$ metric with a unique fixed point. Therefore each symbol sequence has a unique corresponding orbit of \Eq{MapAsFxn}. Additionally, by construction the conditions $(a,c) \in \cR^+$ and $x \in B$ guarantees the radical of \Eq{MapAsFxn} is strictly positive, giving that every orbit of \Eq{MapAsFxn} has a unique symbol sequence $s\in\Sigma$. Therefore, there exists a one-to-one correspondence between symbol sequences and orbits of \Eq{MapAsFxn}.
\end{proof}

Examples are shown in \Fig{AILargePeriod} for an arbitrarily chosen symbol sequence of period $1000$. These AI states are obtained by simply iterating the contraction \Eq{MapAsFxn} for a 
point $\xi \in \cB$ until a convergence tolerance of $10^{-12}$ is reached.
In the following subsections, we will construct the interval $B$ and subset \Eq{RCPlus} for the three cases: $\Delta = 0$, and $\Delta \gtrless 0$.

\InsertFigThree{ParallelLinesAILargePeriod}{EllipseAILargePeriod}{HyperbolaAILargePeriod}
{Orbits of period $1000$ for an arbitrarily chosen symbol sequence at the AI limit (black points) for
(a) parallel lines, (b) ellipse, and (c) hyperbola. 
Also shown are the diagonal $\xi_t=\xi_{t-1}$ (red) and the quadratic curve \Eq{EDelta} (blue).
Refer to \Fig{Cobwebs} for parameters.}{AILargePeriod}{0.3}

Note that a similar argument can be used when the maps $f_\pm$ are expanding, simply by reversing time.
More conveniently, define the inverse maps by solving $Q(\xi_t,\xi_{t-1})=1$ for $\xi_{t-1}$. By symmetry, this simply switches the coefficients $a$ and $c$ and gives the map
\[
    \xi_{t-1} = g_{s_t}(\xi_t) = \frac{1}{2c} \left(-b\xi_t+ s_t\sqrt{\Delta \xi_t^2 + 4c}\right) ,
\]
whenever $c \neq 0$.
When these ``backwards'' functions are contractions on common domains, they will also give a correspondence between symbol sequences and AI trajectories. This is equivalent to simply swapping the parameters $(a,c)$.\footnote
{Note that continuation away from the AI limit in this case would also be equivalent if we transform \Eq{QVPMap} to eliminate the parameter $\tau$ by requiring $b+2c \neq0$.}
We will call the subset of parameters $(a,c)$ for which the sequence $g_\pm$ is a contraction $\cR^-$, and the union $\cR = \cR^+ \cup \cR^-$.

\subsection{Parallel Lines} \label{sec:AIParallel}
When the discriminant vanishes, the curve $\cE$ becomes a pair of parallel lines, recall \Fig{Cobwebs}(a). This occurs on the parabola shown as the red curve in \Fig{acPlane}(a). Using the parameterization \Eq{DeltaZero}, the  map \Eq{MapAsFxn} becomes
\[
    f_{s_t}(\xi_{t-1}) = m \xi_t + s_t(1-m) ,
\]
where $m = -b/2a$ is the slope. Thus this pair of maps are contractions when $|m| < 1$, 
which corresponds to the black half of the $\Delta = 0$ parabola in \Fig{acPlane}(b)
with the boundary point $|b|=2a$ at the vertex $a=c= \tfrac14$. Similarly,
the inverse map becomes a contraction when $|m| > 1$, which corresponds to the red half of the parabola in \Fig{acPlane}(b).

We now find a minimal interval \Eq{BDefine} so that compositions of \Eq{MapAsFxn} have common domains that are mapped into themselves. This occurs for the square centered at origin whose height is the maximum magnitude of the intersection of $f_\pm$ with the diagonal $\xi_t=\xi_{t-1}$ at $1$ and the anti-diagonal $\xi_t=-\xi_{t-1}$ at the period-two orbit \Eq{PeriodTwo}. This gives an interval \Eq{BDefine} with
\beq{xStar}
    x^*=\begin{cases}
    1, & b\leq 0\\
    \frac{1}{\sqrt{1-2b}}, &  0<b<1/2
\end{cases} , \quad  \Delta = 0 .
\eeq


\subsection{Elliptic}

When $\Delta < 0$, the curve $\cE$ is an ellipse centered at origin; this corresponds to the blue region in \Fig{acPlane}(a). In order that \Eq{MapAsFxn} map an interval into itself, the height of the ellipse must be no more than its width (i.e., the range is a subset of the domain). From \Eq{MapAsFxn}, the domain of $f_\pm$ is the interval
\[
    |\xi_{t-1}| \leq 2\sqrt{\frac{a}{|\Delta|}} ,
\]
and, by symmetry, the range is 
\[
    |\xi_t| \leq 2\sqrt{\frac{c}{|\Delta|}} .
\]
Hence the range is a subset of the domain whenever $0< c< a$. Thus in this case we can restrict
\Eq{MapAsFxn} to the range, defining
\beq{xcEllipse}
    x^*= 2\sqrt{\frac{c}{|\Delta|}}, \quad \Delta < 0 .
\eeq

The map \Eq{MapAsFxn} will be a contraction whenever the magnitude of its derivative is less than one. This results in an interval in $\xi_{t-1}$ for each branch of the curve, (see \App{EllipticContractionArguments}). Constraining the intersection of these two intervals to be a subset of the range gives
\beq{EllipticContraction}
   a-2\sqrt{a}+1<c < \begin{cases}
        \frac{1}{7}(1-a+2\sqrt{16a^2-11a+2}) & \tfrac14 < a< \tfrac12 (\sqrt{5}-1)\\
        a+3-2\sqrt{a+2} & a> \tfrac12(\sqrt{5}-1)
        \end{cases},
\eeq
resulting in the blue region in $\cR^+$, seen in \Fig{acPlane}(b).

Similarly, the inverse map becomes a contraction on the tan portion of $\cR^-$, which is obtained from the formulas above by simply exchanging $a$ and $c$.

\subsection{Hyperbolic}\label{sec:AIHyperbola}
When the discriminant is positive, $\cE$ is a hyperbola. 
In this case, the smallest interval for which the range is a subset of the domain is the interval \Eq{BDefine} with $x^*$ given by \Eq{xStar}, the same as for parallel lines.

As before, \Eq{MapAsFxn} is a contraction when the magnitude of its slope is less than one. This is true everywhere if the magnitude of the slopes of the asymptotes,
\beq{m1m2Dfn}
    m_\pm=\frac{-b \pm \sqrt{\Delta}}{2a}, 
\eeq
is less than one.
To find the resulting parameter domain, first observe that
$
    m_+ m_-  
    =\frac{c}{a}. 
$
Thus if $|m_\pm|<1$ then necessarily $|c|<|a|$.
Additionally, note that the condition $|m_\pm|<1$ requires the existence of the intersection of the hyperbola with the anti-diagonal $\xi_t=-\xi_{t-1}$ at the period-two point \Eq{PeriodTwo}, implying that $b<1/2$. Elimination of $b$ and imposing $\Delta > 0$ then gives the region
\beq{HyperbolicContraction}
    \tfrac12-a<c<a-2\sqrt{a}+1,
\eeq
which is shown as the green portion of $\cR^+$ in \Fig{acPlane}(b). The argument for the inverse map is obtained by, again, exchanging $c$ and $a$ to give the red portion of $\cR^-$.

\section{Persistence of Orbits Away From the AI Limit}\label{sec:PersistOrbit}

We showed in \Sec{Existence} that there is a one-to-one correspondence between symbol sequences and orbits of the map \Eq{QVPMap} at the anti-integrable limit, provided that $(a,c) \in \cR$ and $\xi \in \cB$. In this section we will show that these orbits persist for small enough $\eps >0$ by reformulating the third-order difference equation \Eq{OurAIDiff} as a a map
\[
    T(\cdot;s):\cB_M \rightarrow \cB_M ,
\]
on sequences $\xi \in \cB_M \subset \bR^\infty$ for each symbol sequence $s \in \Sigma$ so that a fixed point, $\xi = T(\xi;s)$, corresponds to $\cL_\eps = 0$. The simplest form of such a map is obtained by formally solving \Eq{OurAIDiff} for $\xi_t$:
\beq{TMap}
    T_t(\xi;s) = \frac{1}{2a} \left( -b \xi_{t-1} +
       s_t \sqrt{ \Delta \xi_{t-1}^2 + 4a \left[1+ \eps (\xi_{t+1} + \sigma \xi_{t-1} -\delta\xi_{t-2})\right]} \right) .
\eeq
The operator is constructed so that the AI limit \Eq{MapAsFxn} is reproduced when $\eps=0$. 
Here---without loss of generality---we have assumed that $(a,c) \in \cR^+$, since
one can find a similar operator that applies in $\cR^-$ by solving the quadratic terms for $\xi_{t-1}$ which is essentially \Eq{TMap} under $a \leftrightarrow c$.

To show that this map has fixed points, we will again use the contraction mapping theorem. Thus we must show that for small enough $\eps$, there is a compact set $\cB_M \in \bR^\infty$, effectively an $M$-expansion of the cube defined using the interval \Eq{BDefine}, such that 
\[
    \|DT(\xi;s)\|_{\infty} < 1 .
\]
We will see that $DT(\xi;s)$ is an infinite matrix with a finite number of non-zero elements in each row, so its infinity norm is the supremum of the absolute row sums.

When $T(\cdot,s)$ is a contraction on $\cB_M$ then the unique fixed point of $T$ is an orbit of the map \Eq{OurAIDiff} with AI-limit defined by the symbol sequence $s$. For our analysis, it is convenient to define the parameter
\beq{gamma}
   \gamma \equiv |\eps|(1+|\sigma|+|\delta|) ,
\eeq
that measures the size of the $\eps$ dependent terms in \Eq{TMap}, and to let
\[
   \cR_M^+ \subset \{(a,c,\gamma,M) \in \bR^4 \}
\]
denote the generalization of $\cR^+$, a parameter region on which $T$ will be a contraction. This
set will be constructed for two special cases in the next two subsections. Given this region, we can easily prove:
\begin{lem}\label{lem:AIPersistence} Given $(a,c,\gamma,M) \in \cR_M^+$ and $\xi\in\cB_M$, there is a one-to-one correspondence between each symbol sequence $s \in \Sigma$ and orbit $\{\xi\} \subset \cB_M$ satisfying \Eq{TMap}.
\end{lem}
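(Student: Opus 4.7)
The plan is to mirror the proof of \Lem{AIContraction}: apply the Banach fixed point theorem to $T(\cdot;s)$ on the complete metric space $\cB_M \subset \bR^{\infty}$ equipped with the $\ell^\infty$ norm, for each fixed $s \in \Sigma$, and then recover the symbol $s_t$ uniquely from the sign of the square root in \Eq{TMap}. The set $\cB_M$ is presumably the countable product of a closed interval obtained by enlarging $B$ from \Eq{BDefine} by $M$, so it is compact in the product topology and complete in the $\ell^\infty$ norm by Tychonoff, as in the AI-limit case.

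First, I would verify that $T(\cdot;s)$ maps $\cB_M$ into $\cB_M$. For $\xi \in \cB_M$, the radicand in \Eq{TMap} is
\[
   R_t(\xi) = \Delta \xi_{t-1}^2 + 4a\bigl[1 + \eps(\xi_{t+1} + \sigma \xi_{t-1} - \delta\xi_{t-2})\bigr],
\]
and the $\eps$-dependent perturbation inside the brackets is bounded in absolute value by $\gamma (x^* + M)$, with $\gamma$ as in \Eq{gamma}. Restricting $(a,c,\gamma,M) \in \cR_M^+$ suitably guarantees both that $R_t(\xi) > 0$, so the square root is well defined, and that $|T_t(\xi;s)| \le x^* + M$, so the image remains in $\cB_M$ uniformly in $t$.

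Second, the contraction estimate. Because $T_t$ depends only on $\xi_{t-2}, \xi_{t-1}, \xi_{t+1}$, the infinite Jacobian $DT(\xi;s)$ is banded with at most three nonzero entries in each row, and the $\ell^\infty$ operator norm is the supremum over $t$ of the row sum
\[
   \left|\frac{\partial T_t}{\partial \xi_{t-2}}\right| + \left|\frac{\partial T_t}{\partial \xi_{t-1}}\right| + \left|\frac{\partial T_t}{\partial \xi_{t+1}}\right|.
\]
A direct differentiation of \Eq{TMap} shows that the $\xi_{t-2}$ and $\xi_{t+1}$ derivatives carry an overall factor of $\eps$, while the $\xi_{t-1}$ derivative reduces at $\eps = 0$ to $f'_{s_t}(\xi_{t-1})$, which has magnitude strictly less than one on $\cR^+$ by \Eq{RCPlus}. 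By continuity of each partial in $\gamma$ and in $\xi$, for $\gamma$ small enough and $\xi \in \cB_M$ the full row sum remains strictly less than one uniformly in $t$; one takes $\cR_M^+$ to be precisely the set on which this uniform bound holds together with the range and positivity conditions above. The Banach fixed point theorem then furnishes a unique fixed point $\xi \in \cB_M$, which by construction solves $\cL_\eps(\xi_{t+1},\xi_t,\xi_{t-1},\xi_{t-2}) = 0$ for all $t$, i.e., is an orbit of \Eq{OurAIDiff} with symbol sequence $s$. Conversely, strict positivity of $R_t$ ensures that the sign $s_t$ is unambiguously recovered from a given orbit, yielding the one-to-one correspondence.

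The main obstacle is the explicit construction of $\cR_M^+$: one has to propagate the AI-limit bounds on $|f'_{\pm}|$ and on the radicand $\Delta \xi_{t-1}^2 + 4a$ through the $\eps$-perturbation, and track how $M$ interacts with $\gamma$ to keep both the mapping-into property and the contraction estimate uniform over all $t$. These computations, which split naturally into the elliptic and hyperbolic cases as in \Sec{Existence}, are presumably the content of the subsections that follow the lemma.
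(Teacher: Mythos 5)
Your proposal is correct and follows essentially the same route as the paper: the authors explicitly omit a formal proof, stating it is identical to that of \Lem{AIContraction}, with $\cR_M^+$ defined abstractly as the parameter region where $T(\cdot;s)$ maps $\cB_M$ into itself and satisfies $\|DT\|_\infty<1$, and the explicit inequalities deferred to the $b=0$ and parallel-lines subsections exactly as you anticipate. Your only slight imprecision is that for the $b=0$ case the cube $\cB^0_M(s)$ is centered at the symbol sequence $s$ rather than being a symmetric enlargement of $B$, but this does not affect the argument.
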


We omit a formal proof, as it is the same as \Lem{AIContraction}. Again, this argument also applies in a region
$\cR_M^-$ obtained by exchanging $a \leftrightarrow c$.

As a result of \Lem{AIPersistence}, orbits will not undergo any bifurcations when parameters lie in $\cR_M = \cR_M^+ \cup \cR_M^-$. This ``no-bifurcations'' concept is a simple consequence of uniqueness \cite{Sterling99}.  

In the next two subsections, we consider two special cases and construct the region $\cR_M$ so that $T$ is indeed a contraction. While these cases are clearly not exhaustive, they provide insight into the utility of the AI approach and demonstrate the nontrivial nature of the problem. In \Sec{Bzero} we set $b = 0$, and treat the elliptic, hyperbolic and parallel lines cases together. In \Sec{Parallel} we set $\Delta = 0$, so that the set $\cE$, \Eq{EDelta}, corresponds to a pair of parallel lines. 
In both cases, we will demonstrate that simple iteration of the contraction $T(\xi;s)$ with parameters chosen appropriately, will give orbits of the map \Eq{QVPMap} to any desired precision.

\subsection{Vanishing \textit{b}}\label{sec:Bzero}

As shown in \Fig{acPlane}(b), a portion of the line $b=0$, where $a+c = 1$, is in $\cR^+$. This line intersects all three $\Delta$ cases: parallel lines, elliptic, and hyperbolic. Setting $b=0$ in \Eq{TMap} gives
\beq{VanBOp}
   \xi_t=T^{0}_t(\xi;s) = s_t \sqrt{\frac{1}{a}\left[1 - c\xi_{t-1}^2 + \eps (\xi_{t+1}+\sigma \xi_{t-1}-\delta\xi_{t-2})\right]} ,
   \quad (b=0).
\eeq
Note that 
\[ 
    T^{0}_t(\xi;s)|_{\eps = 0} = f_{s_{t}}(\xi_{t-1}) = s_{t}\sqrt{\frac{1}{a}(1-c\xi_{t-1}^2)} ,
\]
reproduces the map \Eq{MapAsFxn} for this case.

Since $a=1-c>0$, the fixed points at this AI limit are simply $\xi_t = s_t$. The expansion of $\cB$ by $M \in [0,1)$ is then defined to be the cube
\beq{B^b0_M}
      \cB^0_M(s) \equiv \{ \xi \in \bR^\infty \mid \|\xi-s\|_{\infty}\leq M \}, \quad (b=0),
\eeq
for each $s \in \Sigma$.
Given $\xi \in \cB^0_M$ and some algebra (see \App{VanBPersistenceArgument}),  $T^{0}(\xi;s) \in \cB^0_M$ when
\beq{bVanishes1}
    \gamma \leq  \frac{1}{1+M} \min \left((a-1,\,\, 1- a(1-M)^2 \right) -|1-a|(1+M) .
\eeq
Additionally, requiring $\|DT^{0}(\xi;s)\|_{\infty}<1$ leads to the condition
\beq{bVanishes2}
     \gamma < 2a(1-M) - 2|1-a| (1+M) .
\eeq
These conditions imply that $T^{0}: \cB^{0}_M \to \cB^{0}_M$ is a contraction when parameters lie within a region in $(\gamma,M,a)$-space, pictured in \Fig{VanishingBRegions}. Thus, given appropriately chosen parameters, every AI orbit continues uniquely to a fixed point of $T^{0}$, which is an orbit of the map \Eq{QVPMap}.

\InsertFigTwo{VanishingBRegion}{VanishingBPlane}{(a) The region $\cR_M^+$ in $(a,M,\gamma)$ where $T^{0}$ is guaranteed to be a contraction on $\cB^0_M$, and (b) the projection of this region to the $(a,\gamma)$-plane. The red points show the parameters used in \Fig{Tb0Examples}(a) and (b) when $\eps=0.15$. }{VanishingBRegions}{0.5}

Two examples are shown in \Fig{Tb0Examples}. The blue curves are the AI limit curve, \Eq{EDelta}, which, in panel (a), is an ellipse, and in panel (b) is a hyperbola. In both cases, an arbitrarily chosen symbol sequence of period-$250$ and sequence $\xi \in \cB^0_M$ is iterated under $T^0$ until a convergence tolerance of $10^{-12}$ is reached to obtain the orbits.
Orbits with three $\eps$-values are shown; 
at the smallest value, $\eps=0.01$, the orbit essentially lies on the four points $(\xi_{t-1},\xi_t) = (s_{t-1},s_t)$, and as $\eps$ grows the orbits move away from these values. The largest value, $\eps = 0.15$, in the figure corresponds to (a) $\gamma = 0.2625$ and (b) $\gamma = 0.2375$. The $(a,\gamma)$-values for both of these examples are shown as the red points in \Fig{VanishingBRegions}(b), thus in either case the map $T^{0}$ is indeed a contraction.

\InsertFigTwo{Tb0EllipseEx}{Tb0HyperbolaEx}{An arbitrarily chosen sequence $s$ of period-250 is iterated under the contraction $T^0$ to obtain orbits of \Eq{QVPMap} for two cases with $b = 0$:
(a) $(a,c,\sigma,\delta) = (0.9,0.1,0.5,0.25)$ and 
(b) $(a,c,\sigma,\delta) = (1.1,-0.1,0.25,\tfrac13)$.
Three $\eps$-values are shown, $\eps = 0.01$ (yellow), $0.075$ (red), and $0.15$ (blue).
An initial sequence $\xi \in \cB^0_M$, was chosen at random, setting $M = 0.4$ in (a) and $0.6$ in (b). These sequences are iterated until they converge within an error of $10^{-12}$. The orbits are shown in the $(\xi_{t-1},\xi_{t})$-plane with the quadratic curve \Eq{EDelta} (blue), the space $B_M$ (four black boxes), and the diagonal $\xi_t=\xi_{t-1}$ (red). 
The red points in \Fig{VanishingBRegions}(b) show the $(a,\gamma)$-values when $\eps=0.15$ for these examples.}{Tb0Examples}{0.5}

\subsection{Parallel Lines}\label{sec:Parallel}
When $\Delta = 0$, the quadratic curve \Eq{EDelta} is a pair of parallel lines and the coefficients $(a,b,c)$ can be parameterized with the slope $m=-\frac{b}{2a}$, recall \Eq{DeltaZero}.
Rewriting the difference equation \Eq{OurAIDiff} with this parameterization gives
\[
     \eps(\xi_{t+1} + \sigma \xi_{t-1} - \delta \xi_{t-2})=\frac{1}{(m-1)^2}(\xi_t-m\xi_{t-1})^2-1 .
\]
The operator $T$ from \Eq{TMap} then has the form
\beq{ParallelLinesOperator}
    \xi_t = T_t^{\parallel}(\xi;s) = m \xi_{t-1} + s_t(1-m)\sqrt{ 1+ \eps(\xi_{t+1} + \sigma \xi_{t-1} - \delta \xi_{t-2})}, \quad (\Delta = 0),
\eeq
for any $s \in \Sigma$.
This reduces to \Eq{MapAsFxn} for $\Delta = 0$ when $\eps = 0$. Using the bound $x^*$ in \Eq{xStar},
which is appropriate for $\Delta = 0$, we define the cube
\beq{B_M^||}
    \cB_M^{\parallel}=\{\xi \in \bR^\infty \mid \|\xi\|_\infty \leq x^*+M\}, \quad (\Delta = 0) .
\eeq

After some algebra (see \App{GenParallelLinesPersistence}), we see that $T^{\parallel}$ maps 
$\cB_M^{\parallel}$ into itself when 
\beq{ParallelLines1}
    \gamma \leq \frac{(1-|m|)^2(x^*+M)^2-(1-m)^2}{(1-m)^2(x^*+M)}.
\eeq
The bound $\|DT^\parallel(\xi;s)\|_{\infty}<1$ requires
\beq{ParallelLines2}
    \gamma^2(1-m)^2-4(1-|m|)^2(1-\gamma(x^*+M)) < 0.
\eeq
Combining these inequalities gives a region in $(\gamma, M,m)$-space shown in \Fig{ParallelLinesRegion}. Any set of parameters chosen within this region results in the operator $T^{\parallel}$ being a contraction on $\cB^\parallel_M$, implying the existence of a unique orbit of \Eq{QVPMap} for any sequence $s \in \Sigma$.

\InsertFigTwo{GeneralParallelLinesRegion}{GeneralParallelLinesPlane}{(a) The region in $(m,M,\gamma)$-space, where $T^{\parallel}:\cB^\parallel_M \rightarrow \cB^\parallel_M$ and $\|DT^{\parallel}\|_{\infty} < 1$, and (b) the projection of the region onto the $(m,\gamma)$-plane. The red points show the $(m,\gamma)$-values used in \Fig{T__Examples}(a) and (b) when $\eps=0.2$. }{ParallelLinesRegion}{0.5}

Similar to above, we show two examples in \Fig{T__Examples}.
In each case we arbitrarily choose an initial sequence of period-250 and $\xi\in\cB_M^\parallel$ and iterate it under the map $T^\parallel$ until a convergence tolerance of $10^{-12}$ is obtained. Orbits with three $\eps$-values are shown. For the smallest value, $\eps = 0.01$, the orbits essentially fall on the set \Eq{EDelta}, the blue parallel lines, but the orbit is nevertheless nontrivial: it is not simply the symbol sequence.
For the case shown in \Fig{T__Examples}(b), $m$ is larger and the orbit lies in a smaller interval but stretches out more along
the lines.
When $\eps = 0.2$, $\gamma = 0.24$ in (a) and $\gamma = 0.46$ in (b). 
The corresponding $(m,\gamma)$-values, shown as red points in \Fig{ParallelLinesRegion}(b), 
lie within the region where $T^{\parallel}$ is guaranteed to be a contraction.

\InsertFigTwo{T__Example1}{T__Example2}{Orbits of \Eq{QVPMap} of period-250 with parameters (a) $(m,\sigma,\delta)=(-0.2,0.1,0.1)$ and (b) $(m,\sigma,\delta)=(0.95,0.7,0.6)$,
for $\eps=0.01$ (yellow), $0.1$ (red), and $0.2$ (blue).
These are obtained from a randomly chosen sequence $s$ and an initial sequence $\xi \in \cB^\parallel_M$, setting $M = 0.7$ in (a) and $0.5$ in (b). The sequences are iterated using $T^{\parallel}$ until they converge within an error of $10^{-12}$. The orbits are shown in the $(\xi_{t-1},\xi_{t})$-plane with the quadratic curve \Eq{EDelta} (blue), the space $B_M$ (black box; not seen in (b) as the orbit lies in a much smaller interval), and the diagonal $\xi_t=\xi_{t-1}$ (red).  The red points in \Fig{ParallelLinesRegion}(b) show the $(m,\gamma)$-values when $\eps=0.2$ for these examples. }{T__Examples}{0.5}

\section{Continuation from the AI Limit}\label{sec:Continuation}

In this section, we implement a continuation algorithm to find periodic orbits of \Eq{QVPMap} as $\eps$ grows from zero by reformulating the rescaled difference equation \Eq{OurAIDiff} to use a predictor-corrector method. An orbit of period-$n$, i.e., a sequence
\[
    \xi \in  \{\xi \in \bR^\infty \mid \xi_{t+n} \equiv \xi_{t}, \, \forall t \in \bZ \} \simeq \bR^n ,
\]
must be a zero of the function $\cG: \bR^n \times \bR \to \bR^n$ defined by
\[
   \cG(\xi,\eps) =\left( \cL_\eps(\xi_1,\xi_0,\xi_{n-1},\xi_{n-2}), \cL_\eps(\xi_2,\xi_1,\xi_{0},\xi_{n-1}), \ldots ,
    \cL_\eps(\xi_{0},\xi_{n-1},\xi_{n-2},\xi_{n-3}) \right) ,
\]
where $\cL_\eps$ is given in \Eq{OurAIDiff}.
Given a point $(\xi^k,\eps^k)$ such that $G(\xi^k,\eps^k) = 0$ and $\ell$ a predetermined arclength step size, we use a standard pseudo-arclength continuation algorithm
\cite[Sec. 1.2.3]{Krauskopf07}, to solve the system
\bsplit{Arclength}
    &\cG(\xi^{k+1}, \eps^{k+1}) = 0 ,\\
    &\dot{\xi}^k(\xi^{k+1}-\xi^k)+\dot{\eps}^k(\eps^{k+1}-\eps^k)=\ell ,
\esplit
for orbit $\xi^{k+1}$ at parameter $\eps^{k+1}$. 

At each step, a hyperplane orthogonal to the direction vector $(\dot{\xi}^k,\dot{\eps}^k)$ is set at a distance $\ell$ from $(\xi^k,\eps^k)$ to `predict' the location of $(\xi^{k+1},\eps^{k+1})$. Then, to `correct' we use a quasi-Newton method \cite{Allgower90} that uses Broyden's 
iterative formula to approximate the Jacobian of \Eq{Arclength} and a $QR$-decomposition
to find its inverse. 

The algorithm is initialized with the orbit at the AI limit, $(\xi^{0},0)$, and direction vector $(\dot{\xi}^0,0.005)$ so
that $\dot{\xi}^0$ solves the first $n$ rows of 
\beq{tangentVector}
    \begin{pmatrix}
        \partial_\xi \cG(\xi^k,\eps^k) & \partial_\eps \cG(\xi^k,\eps^k) \\
        \dot{\xi^k}^T & \dot{\eps}^k
    \end{pmatrix}
    \begin{pmatrix}
        \dot{\xi}^{k+1}\\
        \dot{\eps}^{k+1}
    \end{pmatrix}
    = \begin{pmatrix}
         0\\ 1
    \end{pmatrix} .
\eeq
Subsequently, each new direction vector $(\dot{\xi}^{k+1},\dot{\eps}^{k+1})$ is found by solving the full system \Eq{tangentVector} to obtain a normalized tangent vector.
 
Other inputs include the parameters $a,b,c,\sigma,\delta$ and an initial arclength step size $\ell$, which varies depending on the period $n$ and values of the parameters. We allow for adjustments in $\ell$: if a solution jumps `too far away' (e.g., converges to another solution), the step is decreased by a factor of two. After each successful step, $\ell$ is reset to the original predetermined value. 
The continuation runs until $\eps$ leaves a predetermined interval, typically $\eps \in [0,2]$, and the Broyden iteration stops when either $\|\cG\|_\infty < 10^{-12}$
or after a maximum number of steps is reached; here set to $150$.

Below we consider four examples with the parameters given in \Tbl{4ExTab}; one has parallel lines, $\Delta = 0$, and three have $b=0$. These cases are labelled by the symbols listed in the first column of the table, and will be referred to by these symbols throughout the rest of our discussion. 

In the case labeled (H\'e) the map \Eq{QVPMap} has Jacobian $\delta = 0$ and is effectively a two-dimensional map (see \App{EmbeddedHenon}). For the parameters in \Tbl{4ExTab}, the resulting dynamics of this case is equivalent to the standard H\'enon map \Eq{HenonMap}, with the 2D Jacobian $\delta_H = \sigma = -0.3$, so we call it an ``embedded H\'enon'' map. When $\eps \approx 0.845$ this map has the strange attractor originally studied in \cite{Henon76}.

\begin{table}[ht]
\centering
\begin{tabular}{c |c c c c c c c}

    Case  & $a$ & $b$ & $c$ & $\Delta$ & $\sigma$ & $\delta$ & $\eps_N$ \\
    \hline
    ($\parallel$) & 25/9 & -20/9 & 4/9 & 0 & 0.1 & 0.1 & 0.5416\\
  
    (E) & 0.9 & 0 & 0.1 & -0.36 & 0.5 & 0.25 & 0.2143\\

    (H\'{e}) & 1 & 0 & 0 & 0 & -0.3 & 0 & 0.4122\\

    (VP) & 1.25 & 0 & -0.25 & 1.25 & 0.1 & 1 & 0.0481\\

\end{tabular}
\caption{Parameters for the four cases followed in \Sec{Continuation}, henceforth referred to by their symbols, listed in the first column. The first has an AI limit with parallel lines, $\Delta = 0$ ($\parallel$). The remaining three have $b=0$: an elliptic AI limit (E), an embedded H\'enon map (H\'{e}), and a hyperbolic AI limit that is volume-preserving (VP). The last column, $\eps_N$, is an upper bound for which \Lem{AIPersistence} guarantees `no bifurcations', i.e., all AI states uniquely continue to orbits for $\eps<\eps_N$.}
\label{tbl:4ExTab}
\end{table}

Results of the continuation for the four cases of \Tbl{4ExTab} are shown in 
Figs.~\ref{fig:BifurcationDiagrams}-\ref{fig:3DLargePeriodExamples}.

The first figure shows bifurcation diagrams for all orbits up to period five 
(a total of $14$ orbits). 
Each orbit is labeled by its AI symbol sequence. 
Note that for each case there is a bifurcation-free region that contains $0 < \eps < \eps_N$,
where $\eps_N$ (the last column in \Tbl{4ExTab}) is the upper bound, obtained in \Sec{PersistOrbit},
so that \Eq{TMap} is guaranteed to be a contraction.
For the first three cases, all of the orbits shown in \Fig{BifurcationDiagrams}---except the fixed points---undergo saddle-node or period-doubling bifurcations within the range $\eps_N < \eps < 2$, with the exception of the (VP) case, where both period-three orbits, $\{--+\}$ and $\{-++\}$, continue beyond $\eps = 2$, as can be seen in \Fig{BifurcationDiagrams}(d). These two orbits eventually undergo a saddle-node bifurcation at $\eps \approx 10.89$. Note that the two fixed points are given by
\[
    \{ \pm \}^\infty:\xi = \tfrac12 \left((1+\sigma-\delta)\eps \pm \sqrt{4+(1+\sigma-\delta)^2\eps^2}\right).
\]
Thus they will collide in a saddle-node when the radicand is zero, or equivalently when $\alpha = -1/\eps^2 = \tfrac14 (1+\sigma-\delta)^2$. Therefore this bifurcation will not occur for the assumed range $\alpha <0$.

\InsertFigFour{ParallelLinesBifDiag}{EllipseBifDiag}{HenonBifDiag}{VPBifDiag}{Points $\xi_t$ on periodic orbits with periods one to five as $\eps$ varies for the four examples of \Tbl{4ExTab}. Each orbit is labeled by its AI symbol sequence.
}{BifurcationDiagrams}{0.5}

Recall that when $b=0$ the AI state corresponds to $\xi_t = s_t$, so all of the orbits in these cases emerge from these two points in \Fig{BifurcationDiagrams}. For the parallel lines case the AI state is nontrivial.
Nevertheless, since this case has slope $m = -b/2a = 0.8 <1$, we iterate the contraction \Eq{MapAsFxn} starting with $\xi = s$
to give the AI state to start the continuation at $\eps = 0$.

Bifurcations for the four cases for all orbits up to period six are summarized in \Tbl{BifTab}.\footnote
    {The $\{-\} \to \{-,+\}$ period doubling occurs at $\eps=\frac{2(a-c)}{\sqrt{(1+\sigma+\delta)(a(3+3\sigma-\delta)+b(1+\sigma+\delta)+c(-1-\sigma+3\delta))}}$. This reproduces the results in the first line of \Tbl{BifTab}.}
Note that all of these bifurcation values lie beyond $\eps_N$, as given in \Tbl{4ExTab}. For the case (H\'e), these results are consistent with bifurcations of the 2D H\'{e}non map. In particular, it was
shown in \cite{Sterling99} that there are no bifurcations below $\eps \approx 0.4999$ (see \App{EmbeddedHenon}), and here, the smallest observed bifurcation for the (H\'{e}) case is a saddle-node of two period-six orbits at $\eps \approx 0.61$.

\begin{table}[ht]
\centering
\begin{tabular}{c|c|c|c|c|c|c}
    Parent & Type & Child & \multicolumn{4}{c}{Case} \\
    &  &  & ($\parallel$) & (E) & (H\'{e}) & (VP) \\
    \hline
    $\{-\}^\infty$ & pd
    & $\{-,+\}^\infty$  & 1.75 & 0.63 & 1.64 & 1.34\\ 
    \hline
     $\{-,+\}^\infty$ & pd & $\{-,-,-,+\}^\infty$  & 1.39 & 0.51  & 1.09 & 0.71\\ 
    \hline
    $\{-,+,+\}^\infty$ & pd & $\Tvec{\{-,-,+,+,-,+\}^\infty}{\{-,-,+,-,+,+\}^\infty}$   
                                & $\Tvec{1.68}{-}$ &  $\Tvec{1.10}{-}$ & 
                                $\Tvec{0.61}{-}$ & $\Tvec{-}{1.40}$\\
    \hline
     & sn & $\{-,\pm,+\}^\infty$   & 1.72 &  1.33 & 0.61 & 10.89\\
    \hline
     & sn & $\{-,\pm,+,+\}^\infty$ & 1.63 & 0.81 & 0.62 & 0.71\\ 
    \hline
     & sn & $\Tvec{\{-,-,\pm,-,+\}^\infty}{\{-,-,-,\pm,+\}^\infty}$       & 
                                $\Tvec{1.27}{-}$ & $\Tvec{0.64}{-}$   & 
                                $\Tvec{-}{0.81}$ & $\Tvec{0.73}{-}$\\ 
    \hline
     & sn & $\Tvec{\{-,\pm,-,+,+\}^\infty}{\{-,+,-,+,\pm\}^\infty}$  & 
                                $\Tvec{1.13}{-}$ & $\Tvec{0.43}{-}$ & 
                                $\Tvec{-}{ 0.66}$ & $\Tvec{0.47}{-}$\\
   \hline
     & sn & $\{-,\pm,+,+,+\}^\infty$  & 1.60 & 0.71 & 0.61 & 0.52\\
    \hline
      & sn & $\{-,-,-,\pm,-,+\}^\infty$ & 1.32  & 0.52 & 0.97 & 0.70\\
     \hline
     & sn & $\Tvec{\{-,\pm,-,+,+,+\}^\infty}{\{-,+,-,+,+,\pm\}^\infty}$ &  
                              $\Tvec{1.10}{-}$ & $\Tvec{0.44}{-}$ & 
                              $\Tvec{-}{0.70}$ & $\Tvec{0.54}{-}$\\
      \hline
     & sn & $\{-,\pm,+,+,+,+\}^\infty$ &  1.59 & 0.72 & 0.61 & 0.56\\
    \hline 
     & sn & $\Thvec{\{-,-,\pm,-,+,+ \}^\infty}{\{-,\pm,-,-,+,+\}^\infty}{\{-,-,-,\pm,+,+\}^\infty}$ &
                              $\Thvec{1.14}{-}{-}$ & $\Thvec{0.62}{-}{-}$ & 
                              $\Thvec{-}{-}{0.83}$ & $\Thvec{-}{0.55}{-}$\\
\end{tabular}
\caption{Parameter, $\eps$, for period-doubling (pd) and saddle-node (sn) bifurcations 
for all orbits up to period six for the four examples of \Tbl{4ExTab}.
Orbits are identified by their symbol sequences in the first and third columns.
For saddle-node bifurcations, the symbol sequences of the two colliding orbits are 
listed together: the $\pm$ indicates the single symbol that differs.
A dash indicates the bifurcation does not occur with that pair for that case.}
\label{tbl:BifTab}
\end{table}

Notice that there appears to be a pattern in the symbol sequences for orbits that bifurcate: the symbol sequences of bifurcating orbits differ in only one symbol. Indeed, we list the two orbits involved in a saddle-node in \Tbl{BifTab} as a single symbol sequence, denoting the sole symbol that differs by $\pm$.  This pattern is also observed for a number of other parameter sets and periodic orbits with periods up to $75$. Since these are the two generic codimension-one bifurcations, we propose the following conjecture.

\begin{con}\label{con:Bifurcation}
Codimension-one bifurcations of periodic orbits continued from the AI limit of the map \Eq{QVPMap} only occur between orbits with exactly one differing symbol.
\end{con}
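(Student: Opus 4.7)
The plan is to identify each codimension-one bifurcation of a period-$n$ orbit of \Eq{OurAIDiff} with the vanishing of the radicand of \Eq{TMap} at some index $t$, and then to use dimension counting to conclude that only one such vanishing can occur generically along the one-parameter family in $\eps$.

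First I would record an algebraic identity linking the Jacobian of $\cG$ to the radicand of \Eq{TMap}. Differentiating \Eq{OurAIDiff}, the $i$-th row of $D_\xi\cG$ has the four nonzero entries
\[
    \partial_{\xi_{i+1}}\cG_i=-\eps,\quad
    \partial_{\xi_i}\cG_i=2a\xi_i+b\xi_{i-1},\quad
    \partial_{\xi_{i-1}}\cG_i=b\xi_i+2c\xi_{i-1}-\eps\sigma,\quad
    \partial_{\xi_{i-2}}\cG_i=\eps\delta ,
\]
and a completion of the square using \Eq{OurAIDiff} along an orbit gives
\[
    \bigl(2a\xi_t+b\xi_{t-1}\bigr)^{2}
      = \Delta\,\xi_{t-1}^{2}+4a\bigl[1+\eps(\xi_{t+1}+\sigma\xi_{t-1}-\delta\xi_{t-2})\bigr]\equiv R_{t}(\xi,\eps).
\]
Hence on the branch selected by $s_t$ one has $\partial_{\xi_t}\cG_t = s_t\sqrt{R_t}$, so the two branches $\pm$ of \Eq{TMap} coalesce at index $t$ precisely when $R_t=0$.

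Second I would analyze the saddle-node case. Suppose two period-$n$ orbits $\xi^{(1)},\xi^{(2)}$ with symbol sequences $s^{(1)},s^{(2)}$ collide at $\eps=\eps^*$, and let $D=\{t:s_t^{(1)}\neq s_t^{(2)}\}$. At the bifurcation $\xi^{(1)}=\xi^{(2)}\equiv\xi^*$; for every $t\in D$ the two orbits sit on opposite branches of the quadratic in $\xi_t$ defined by \Eq{OurAIDiff}, so coincidence of their values forces the discriminant to vanish, $R_t(\xi^*,\eps^*)=0$. For $t\notin D$ both orbits are on the same branch and no extra condition arises. Thus a collision must satisfy the augmented system
\[
    \cG(\xi^*,\eps^*)=0,\qquad R_{t}(\xi^*,\eps^*)=0 \text{ for all } t\in D ,
\]
which is $n+|D|$ equations in the $n+1$ unknowns $(\xi^*,\eps^*)$. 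Provided these equations are independent, their solution set has codimension $|D|$ in $\eps$; for a bifurcation occurring at an isolated value of $\eps$ we must therefore have $|D|=1$. Period-doubling reduces to the same count by lifting the parent orbit $s$ of period $n$ to the period-$2n$ orbit $(s,s)$: the new period-$2n$ orbit $(s,t)$ born at the bifurcation merges with $(s,s)$ at $\eps^*$, so the saddle-node argument applied in length $2n$ again yields exactly one differing symbol.

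The main obstacle is the transversality step: showing that the $n+|D|$ augmented equations are genuinely independent at a collision so that the codimension really is $|D|$. This amounts to computing the rank of the enlarged Jacobian whose additional rows are the gradients $\nabla R_t$ for $t\in D$, and using the banded circulant structure of $D_\xi\cG$ together with the identity $\partial_{\xi_t}\cG_t=s_t\sqrt{R_t}$ to show that the rank drops by exactly $|D|$ on an open-dense subset of parameter space $(a,b,c,\sigma,\delta)$. Ruling out higher-codimension degeneracies---simultaneous vanishings of $R_t$ at distinct indices, coincident eigenvalues $\pm 1$, or algebraic resonances among the coefficients---then promotes the heuristic count to a proof, and is where the bulk of the technical effort would need to go.
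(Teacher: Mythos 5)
First, a point of calibration: the paper does not prove this statement. It is stated as \Con{Bifurcation} precisely because the authors have only numerical evidence (Table~\ref{tbl:BifTab} plus tests up to period $75$), and they explicitly remark that they are unaware of a proof even for the analogous observation in the H\'enon map. So there is no proof in the paper to compare against, and your sketch must be judged as an attempt at an open problem. Within that frame, your algebra is correct and genuinely useful: the identity $(2a\xi_t+b\xi_{t-1})^2=R_t$ along orbits, hence $\partial_{\xi_t}\cG_t=s_t\sqrt{R_t}$, correctly ties coalescence of the two branches of \Eq{TMap} at index $t$ to the vanishing of the radicand, and the count of ``$n+|D|$ equations in $n+1$ unknowns'' is the right heuristic for why a codimension-one event should involve exactly one differing symbol.

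Beyond the transversality issue you flag, there is a gap you do not name, and I think it is the harder one. Your set $D$ is defined by the \emph{AI} symbol sequences, i.e., the labels attached at $\eps=0$, but your necessary condition ``$R_t=0$ for all $t\in D$'' only follows if, at $\eps^*$, each orbit still sits on the branch named by its AI symbol. Along the continuation the quantity $h_t=2a\xi_t+b\xi_{t-1}$ can cross zero at a \emph{regular} point of the solution curve of $\cG=0$: this is a single scalar condition along a one-parameter family, hence occurs generically at isolated parameter values, and there the instantaneous branch label flips with no bifurcation, since $D_\xi\cG$ need not be singular when one diagonal entry vanishes. Consequently your dimension count controls the number of positions at which the two colliding orbits occupy opposite \emph{instantaneous} branches at $\eps^*$, not the number of positions at which their AI sequences differ; to recover the conjecture you must either rule out such branch flips before the first bifurcation (true inside $\cR_M$, where $R_t$ is bounded away from zero, but the interesting bifurcations all occur outside it) or show that the flips accumulated by the two orbits cancel. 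Two smaller items: the independence of the augmented equations is threatened not only by ``algebraic resonances'' but by the cyclic-shift symmetry of periodic sequences (a collision of an orbit with a shift of itself would force several $R_t$ to vanish simultaneously), and both bifurcation types require choosing the correct cyclic alignment of the two symbol sequences before counting differences --- e.g., the doubled parent $\{-,+,+,-,+,+\}^\infty$ and the child $\{-,-,+,+,-,+\}^\infty$ differ in one symbol only after shifting one of them --- so the alignment must be built into the statement you actually prove.
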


Note that this conjecture also appears to hold for the \hen map \cite{Sterling99}, but we are unaware of a proof even in this case.

To gain more intuition on the behavior of orbits under continuation, an orbit with period $n=2500$
for an arbitrarily chosen symbol sequence is shown in \Fig{2DLargePeriodExamples} for each of the
four cases in \Tbl{4ExTab}. Each orbit is shown projected onto the $(\xi_{t-1},\xi_t)$-plane for
four values of $\eps$. When $\eps$ is small, the orbits lie close to the curve \Eq{EDelta},
like the AI states shown in \Fig{AILargePeriod}. As $\eps$ grows they move away from the curve and
form clusters and clusters of clusters. In each case, the maximum $\eps$ shown is the largest value
for which the continuation converged. The self-similar behaviour in the clusters becomes more apparent
in \Fig{2DLargePeriodExamplesQ1}, where we enlarge the first quadrant of \Fig{2DLargePeriodExamples}
and only show these orbits for the maximum $\eps$.

\InsertFigFour{ParallelLinesEx2DLargePeriod}{EllipseEx2DLargePeriod}{HenonEx2DLargePeriod}{VPEx2DLargePeriod}{Continuation of a period-2500 orbit with an arbitrarily chosen symbol sequence for the parameters of \Tbl{4ExTab} and  four values of $\eps$, as shown in the legends. The blue curve is the quadratic $Q(\xi_t,\xi_{t-1})=1$ and the red line is the diagonal $\xi_t=\xi_{t-1}$.}
{2DLargePeriodExamples}{0.5}

Finally, \Fig{3DLargePeriodExamples} shows the three-dimensional structure of the orbit in \Fig{2DLargePeriodExamples} for each of the cases of \Tbl{4ExTab}. Also shown in the figure are the two quadratic surfaces $Q(\xi_t,\xi_{t-1})=1$ and $Q(\xi_{t+1},\xi_t)=1$. Recall that in 3D, AI states must fall on  the intersection of these surfaces,  as illustrated in \Fig{3DAISurface}. For the first three cases, the structure in three-dimensions resembles the standard folded horseshoe shape of the H\'enon attractor; indeed all three of these maps are volume-contracting, with $\delta < 1$, and of course, the (H\'e) case is indeed a 2D H\'enon map embedded in 3D. The fourth case, which is volume-preserving, exhibits a much more complex three-dimensional structure, though it still appears to be related to a fold in the $(\xi_{t+1}, \xi_{t})$ plane, which
is equivalent to the $(x,y)$ plane in \Eq{QVPMap}.


\InsertFigFour{ParallelLinesEx2DLargePeriodQ1}{EllipseEx2DLargePeriodQ1}{HenonExFractal}{VPEx2DLargePeriodQ1}{An enlargement of the period-2500 orbit for the largest $\eps$ of \Fig{2DLargePeriodExamples} near the point (1,1) showing approximately self-similar structures. The blue curve is $Q(\xi_t,\xi_{t-1})=1$ and the red line is the diagonal $\xi_t=\xi_{t-1}$.}
{2DLargePeriodExamplesQ1}{0.5}

\InsertFigFour{ParallelLinesEx3DLargePeriod}{EllipseEx3DLargePeriod}{HenonEx3DLargePeriod}{VPEx3DLargePeriod}{The period-2500 orbit of \Fig{2DLargePeriodExamples} of the four examples of \Tbl{4ExTab} in $\bR^3$ for ($\parallel$) $\eps\approx1.0959$, (E) $\eps\approx0.42434$, (H\'{e}) $\eps\approx0.59293$, and (VP) $\eps\approx0.46016$. Also shown are the surfaces 
$Q(\xi_t, \xi_{t-1}) = 1$ and $Q(\xi_{t+1},\xi_t)= 1$}
{3DLargePeriodExamples}{0.5}

\section{Conclusions}\label{sec:Conclusions}

Previous research on anti-integrable limits for quadratic 3D maps has been limited to the case that
the AI limit corresponds to either a deterministic 1D map or to a full shift on two symbols.
We have argued that the AI limit more generally corresponds to a quadratic correspondence, like \Eq{AIRelation}.
Now the symbolic dynamics arises from the choice of branch at each iteration. In previous studies, either only one branch of the correspondence was used, or it was simplified by assuming AI states are identical to their symbols, e.g., the case $b=c=0$.
In our study, the AI states are still nondeterministic---as in the original paper on anti-integrability \cite{Aubry90}---but now each symbol sequence determines a sequence of one-dimensional maps whose orbits correspond to the AI states. For the cases that we study, this 1D map sequence is a contraction for \textit{any} given symbol sequence; thus, just as in \cite{Aubry90}, 
we still get a collection of AI states that has the dynamics of a full shift on two symbols.

Here we limited our study to only one of the possible AI limits listed in \Sec{3DAI}, namely $\alpha \to -\infty$.
This case, however, does have the main feature of the AI limit for cases (2) and (3): a quadratic correspondence. In a follow-up paper, we plan to generalize these results to include other cases. For example, it would be interesting to understand how the various AI limits connect as the parameters vary from one sector at $\infty$ in the $(\alpha,\sigma)$ plane to another.  It would also be interesting to investigate AI limits that allow the coefficients $a,b,c$ of $Q$ to approach infinity as well.

Both the analysis at the AI limit and for continuation away from the limit took advantage of the contraction mapping theorem to prove the existence and uniqueness of orbits for each symbol sequence. In \Sec{Existence}, we studied the three types of quadratic curves and found a region $\cR$ of parameters for which the map $\cF$ \Eq{AIMapF} is a contraction on $\cB$. In \Sec{PersistOrbit} we limited the study to two fairly simple cases, where one can easily show the map $T(\xi;s)$ \Eq{TMap} with parameters $(a,c,\gamma,M)\in\cR_M$ remains a contraction for some $\eps > 0$ on a domain $\cB_M$, recall \Eq{B^b0_M} and \Eq{B_M^||}, for a bound $M$. Nevertheless, numerical results from a standard predictor-corrector continuation algorithm shown in \Sec{Continuation} indicate that our analysis could be extended to larger parameter and phase space regions. 

For the 2D, area-preserving H\'enon map, \cite{Dullin05} found that the rotational periodic orbits,those that encircle the elliptic fixed point, are the continuation of AI states with symbol sequences of a particular ``rotational'' class. It is interesting to speculate that there could be a similar correlation between special symbol sequences at the AI limit and rotational orbits in the 3D volume-preserving case. For example, \cite{Dullin08a} found parameter regimes in which the map has an invariant circle surrounded by a (Cantor) family of nested invariant tori. 

The continuation algorithm in \Sec{Continuation} showed that orbits can be followed from the AI limit beyond the theorem-limited results for a maximum $\eps$-value. This suggests that our analysis can indeed be used to better understand structural development of orbits as parameters vary. We propose that these ideas could be used to prove the existence of cantori and understand the resulting formation of tori, i.e., to understand how chaotic structures undergo bifurcations to regularity. 

An example of such a regular structure is shown in \Fig{ClosedCurve}, projected onto the $(\xi_{t-1},\xi_t)$-plane
for $(a,c,\sigma,\delta)=(0.5,0.5,-0.3,0.5)$.
For these parameters, the map \Eq{QVPMap} appears to have an attracting invariant
circle---the set of blue points in \Fig{ClosedCurve}---when $\alpha=-1$ (or equivalently when $\eps=1$).
Can we find this orbit using continuation from the AI limit?
When $\eps = 0$, these parameters lie outside of $\cR$, so \Lem{AIContraction} does not guarantee
a unique orbit for each symbol sequence.
Indeed, since $a=c$ and $b=0$ the curve \Eq{EDelta} is a circle centered on the origin. So while the 
correspondence $Q(\xi_t,\xi_{t-1}) = 1$ is well-defined for any symbol sequence on the domain 
$|\xi| < \sqrt{2}$, it is \textit{not} a contraction and therefore each symbol sequence need 
\textit{not} correspond to a unique orbit. To obtain an approximately periodic orbit that 
reproduces that seen in \Fig{ClosedCurve}, we iterate the point 
$(x,y,z) =(-0.7222,-1.4040,0.1022)$ 
on the invariant circle until it 
returns within a distance of $0.005$ of itself; this first happens at $t = 1089$. 
We then find a symbol sequence by solving \Eq{TMap} for $s_t$ to give a sequence 
$\{s_0,s_1,\ldots, s_{1088}\}$ associated with this nearly periodic orbit. 
This is taken to represent a period-1089 AI state with $\xi_t = s_t$ (since $b = 0$), at $\eps = 0$.
The method discussed in \Sec{Continuation} continues this to a periodic orbit of \Eq{QVPMap} up 
to $\eps\approx 0.2721$; the resulting orbit (now in the scaled $\xi$ coordinates) is shown as the black points in \Fig{ClosedCurve}. 

\InsertFig{ClosedCurve}{Two-dimensional projection of an attracting invariant circle (blue points) of the map \Eq{QVPMap} with parameters $(a,c,\sigma,\delta)=(0.5,0.5,-0.3,0.5)$ at $\alpha=-1$ ($\eps=1$). The black points show a period $1089$ orbit with symbol sequence obtained by solving \Eq{TMap} for $s_t$ using points on the invariant circle. This sequence is continued from $\eps = 0$ to $\eps\approx0.2721$, the maximum $\eps$ for which the continuation algorithm converged. Also pictured are the quadratic curve \Eq{EDelta} (blue), and the diagonal $\xi_{t}=\xi_{t-1}$ (red).}{ClosedCurve}{0.5}

Although this orbit has not (yet) reached the attracting loop at $\eps = 1$, it is interesting to note that tests of twenty other randomly generated period-$1089$ symbol sequences gave orbits that continued at most to $\eps \approx 0.03$, suggesting that the sequence derived from the invariant circle is unusually robust. This promising---if incomplete---result  shows the potential of our approach of using anti-integrability to understand more about the development of regular structures as parameters are varied.


\newpage
\appendix
\section{Appendix}

%
\subsection{Elliptic Contraction}\label{app:EllipticContractionArguments}

To find the domain, $D$ where the elliptic curve $Q(\xi_t,\xi_{t-1})=1$ has absolute slope less than one, differentiate \Eq{MapAsFxn} to obtain
\beq{MapAsFxnDeriv}
    f_{s}'(\xi_{t-1}) = -\frac{b}{2a}+ \frac{s\Delta\xi_{t-1}}{2a\sqrt{4a+\Delta \xi_{t-1}^2}} .
\eeq
This implies that $|f'_s|<1$ in an interval for each branch, so that $D$ is the intersection of these intervals:
\begin{align*}
    D  &=  \frac{1}{\sqrt{|\Delta|}}\left(-a+c-1,\frac{3a+c-1}{\sqrt{2a+2c-1}}\right) \cap     \frac{1}{\sqrt{|\Delta|}}\left(\frac{-3a-c+1}{\sqrt{2a+2c-1}},a-c+1\right)\\
    &=\left( -\Pi,\Pi \right) \text{ for } 
    \Pi=\frac{1}{\sqrt{|\Delta|}}\min \left(|a-c+1| ,\frac{|3a+c-1|}{\sqrt{2a+2c-1}}\right) .
\end{align*}
For the orbit to exist at the AI limit, the range \Eq{xcEllipse} must be be a subset of $D$, which implies
\[
    2\sqrt{c} \leq \min \left(|a-c+1| ,\frac{|3a+c-1|}{\sqrt{2a+2c-1}}\right).
\]
Solving for $c$ and noting the two terms in the $\min$ are equal at $a=\frac{-1+\sqrt{5}}{2}$, we obtain \Eq{EllipticContraction}.

\subsection{Persistence for Vanishing \textit{b}}\label{app:VanBPersistenceArgument}

Using \Eq{B^b0_M} and $\gamma$ as defined in \Eq{gamma}, we find the conditions necessary to guarantee $T^{0}(\xi;s)$, \Eq{VanBOp}, is a contraction.
We first bound $T^{0}$ from above
\begin{align*}
         \| T^{0}(\xi;s) \|_\infty &= \left\| s_t \sqrt{\frac{1}{a}\left[1 + \eps (\xi_{t+1}+\sigma \xi_{t-1}-\delta \xi_{t-2})-c\xi_{t-1}^2\right]} \right\|_\infty\\
         &\leq \sqrt{\frac{1}{|a|}(1 + \gamma \| \xi \|_\infty  + |c| \| \xi \|_\infty^2)}\\
         &\leq \sqrt{\frac{1}{|a|}\left[1+ \gamma (1+M)  + |c| (1+M)^2\right]} .
    \end{align*}    
Requiring this expression to be no more than $1+M$ gives
\beq{bVanishes1.2}
  \gamma \leq \frac{(|a|-|c|)(1+M)^2-1}{1+M} .
\eeq
Recalling that $c=1-a$  and taking $a>0$ gives the first expression in \Eq{bVanishes1}. Similarly, bounding the operator from below gives
 \begin{align*}
        \| T^{0}(\xi;s)\|_\infty &\geq  \sqrt{\frac{1}{|a|}(1-\gamma \| \xi \|_\infty - |c| \| \xi \|_\infty^2)} \\
        & \ge \sqrt{\frac{1}{|a|}\left[1-\gamma (1+M) - |c| (1+M)^2 \right]} .
\end{align*} 
Requiring this expression be larger than $1-M$ then gives
\beq{bVanishes1.2new}
        \gamma  \leq \frac{1- |a|(1-M)^2- |c| (1+M)^2}{1+M} .
\eeq
Thus the conditions \Eq{bVanishes1} are required for $T^{0}: \cB_M \to \cB_M$.

The derivative of $T^{0}$ is the matrix
\[
    D_{j}T^{0}_t(\xi;s) = \frac{s_t}{2 a T^{0}_t(\xi;s)} \left[\eps(\Delta_{j,t+1} +\sigma \Delta_{j,t-1} - \delta \Delta_{j,t-2})
                -2c \xi_{t-1} \Delta_{j,t-1} \right] ,
\]
where $\Delta_{i,j}$ is the Kronecker-delta.
When $\xi$ and $T^{0}(\xi;s) \in \cB_M$, this gives the bound
\[
    \|DT^{0}(\xi;s)\|_{\infty}
         \le \frac{\gamma + 2|c| (1+M)}{ 2|a|(1-M)} .
\]
Forcing this value be strictly less than one and substituting $c=1-a$ gives the condition \Eq{bVanishes2}.


\subsection{Persistence for Parallel Lines}\label{app:GenParallelLinesPersistence}

To determine the conditions so that $T^\parallel(\xi;s)$, \Eq{ParallelLinesOperator}, is a contraction on \Eq{B_M^||} we first bound
\begin{align*}
    \|T^{\parallel}_t(\xi;s)\|_\infty &=\|m\xi_{t-1}+s_t(1-m)\sqrt{1+\eps(\xi_{t+1}+\sigma \xi_t - \delta \xi_{t-2})}\|_\infty\\
    &\leq|m| \|\xi\|_\infty+|1-m|\sqrt{1+\gamma \|\xi\|_\infty}\\
    &\leq |m|(x^*+M)+|1-m|\sqrt{1+\gamma (x^*+M)} ,
\end{align*}
where $\gamma$ is given in \Eq{gamma} and $x^*$ by \Eq{xStar}.
Requiring that this norm is no more than $x^*+M$ gives
\[
     \gamma(x^*+M) \leq \frac{(1-|m|)^2}{(1-m)^2}(x^*+M)^2 -1 ,
\]
resulting in the condition given in \Eq{ParallelLines1}.

Computing the derivative gives 
\[
    D_{j}T^{\parallel}(\xi;s)_t = m\Delta_{j,t-1} +s_t\eps(1-m) \frac{\Delta_{j,t+1} +\sigma \Delta_{j,t-1} - \delta \Delta_{j,t-2}}{2 \sqrt{1+\eps(\xi_{t+1}+\sigma\xi_{t-1}-\delta\xi_{t-2})}},
\]
which yields
\[
    \|DT^{\parallel}_t(\xi;s)\|_\infty 
    \leq |m| + \frac{\gamma|1-m|}{2\sqrt{1-\gamma(x^*+M)}}.
\]
Requiring $\gamma(x^*+M)<1$ for the radical to remain real and bounding so this value is strictly less than one gives the condition \Eq{ParallelLines2}.

\subsection{Embedded \hen map}\label{app:EmbeddedHenon}

When $\delta = 0$ in \Eq{QVPMap}, the dynamics of the 3D map becomes essentially two-dimensional, because the system is a semi-direct product of the map $(x,y) \to (G(x,y),x)$ with the map $z \to y$.
The $(x,y)$ map has Jacobian $\sigma- bx-2cy$, which generally vanishes along a line, so the 2D map is not globally invertible. For the Jacobian to be everywhere nonzero, $b$ and $c$ must necessarily be zero. Since this results in a map
that is quadratic and invertible, it must then be conjugate to the \hen map \Eq{HenonMap}. Under the standard scaling \Eq{ParamSpace}, $(a,b,c) = (1,0,0)$ and the effective 2D map becomes
\[
    (x',y') = (\alpha -\sigma y + x^2, x) .
\]
This is indeed conjugate to \Eq{HenonMap} under the transformation $(\xi,\eta) = (x,-\sigma y)$
where $k = -\alpha$ and $\delta_H = \sigma$.

The famous strange attractor discovered by \hen \cite{Henon76} occurs when $(k,\delta_H) = (1.4,-0.3)$ where the map is contracting and orientation reversing. To replicate this, we choose in \Tbl{4ExTab} $\sigma = -0.3$ for the case (H\'e), giving an ``embedded \hen map''.
 Using the AI scaling $\alpha=-\eps^{-2}$, the standard \hen attractor will occur when  $\eps=\sqrt{1/1.4}\approx0.845$.
 
 More generally, it was proven in \cite{Sterling99} that there are no bifurcations for the \hen map for
\[
    \eps (1+2|\delta_H|) < 2\sqrt{1-2/\sqrt{5}} \approx 0.6498, 
\]
a region first found by Devaney and Nitecki \cite{Devaney79}. The implication is that every orbit at the AI limit has a unique continuation in this range and there is a hyperbolic horseshoe conjugate to a full shift on two symbols.
For the (H\'e) case of \Tbl{4ExTab} this implies no bifurcations until $\eps \approx 0.4061$, which agrees with observations in \Fig{BifurcationDiagrams} and \Tbl{BifTab}.

\bibliographystyle{alpha}
\bibliography{AIBibliography}

\end{document}